\newtheorem{thm}{Theorem}[section]
\newtheorem{lem}[thm]{Lemma}
\newtheorem{prop}[thm]{Proposition}
\theoremstyle{definition}
\numberwithin{equation}{section}
\newcommand{\be}{\begin{equation}}
\newcommand{\ee}{\end{equation}}
\newcommand{\ben}{\begin{enumerate}}
\newcommand{\een}{\end{enumerate}}
\newcommand{\beq}{\begin{eqnarray}}
\newcommand{\eeq}{\end{eqnarray}}
\newcommand{\beqn}{\begin{eqnarray*}}
\newcommand{\eeqn}{\end{eqnarray*}}
\newcommand{\e}{{\epsilon}}
\newcommand{\ssfm}{spherically symmetric Finsler metric}
\newcommand{\cfc}{constant flag curvature}
\begin{document}

%%%%% To ease editing, for IMPAN journals add:

\baselineskip=17pt

%%%%%%%%%%%

%% In the running head, replace first names by initials
%% and give an abbreviation of the title.

\title{Spherically Symmetric Finsler Metrics With Constant Ricci and Flag Curvature}

\author[E. \d{S}engelen Sev\.{i}m]{Esra \d{S}engelen Sev\.{i}m}
\address{DEPARTMENT OF MATHEMATICAL SCIENCES\\
\.{I}STANBUL B\.{I}LG\.{I} UNIVERSITY\\
ESK\.{I} S\.{I}LAHTARA\u{G}A ELEKTR\.{I}K SANTRAL\.{I}\\
KAZIM KARABEK\.{I}R CAD. NO: 2/13\\
34060 EY\"UP, \.{I}STANBUL, TURKEY}
\email{esra.sengelen@bilgi.edu.tr}

\author[Z. Shen]{Zhongmin Shen}
\address{DEPARTMENT OF MATHEMATICAL SCIENCES\\
INDIANA UNIVERSITY-PURDUE UNIVERSITY\\
INDIANAPOLIS, IN 46202-3216, USA}
\email{zshen@math.iupui.edu}

\author[S. \"Ulgen]{Sema\.{i}l \"Ulgen}
\address{DEPARTMENT OF INDUSTRIAL ENGINEERING\\
ANTALYA INTERNATIONAL  UNIVERSITY\\
\c{C}IPLAKLI MAHALLES\.{I} FARAB\.{I} CADDES\.{I} NO: 23 07190\\
D\"O\d{S}EMEALTI, ANTALYA, TURKEY}
\email{sulgen@antalya.edu.tr}

\date{26 March 2015}

\begin{abstract}
Spherically symmetric metrics form a rich and important class of metrics. Many well-known Finsler metrics of constant flag curvature can be  locally expressed as a spherically symmetric metric on $R^n$.  In this paper, we study spherically symmetric  metrics with constant Ricci curvature and constant flag curvature.
\end{abstract}

\subjclass[2010]{Primary 53B40}

\keywords{Spherically symmetric metrics,  constant Ricci curvature, constant flag curvature}

\maketitle

\section{Introduction}
It is one of important problems in Finsler geometry to study and characterize Finsler metrics with constant flag curvature or  constant Ricci curvature. Let $R^{\ i}_{j \ kl}$ denote the Riemann curvature tensor of the Berwald connection and $R^i_{\ k}:= R^{\i}_{j \ kl}y^jy^l$. The Ricci curvature $Ric$ is defined as $Ric = R^m_{\ m}$.  A Finsler metric $F$ is said to be of {\it constant flag curvature}  if
\[  R^i_{\ k} = K  \{ F^2 \delta^i_k - g_{kl}y^l  y^i \},\]
and it is aid to be of {\it constant Ricci curvature } if
\[ Ric = (n-1) K F^2.\]

Many Finsler metrics of constant flag curvatures can be locally expressed  on a ball $B^n(\rho)\subset R^n$  in the following form
\[   F = |y|\phi(r,s), \ \ \ \ r=|x|, \ s= \frac{\langle x, y \rangle}{|y|}, \ \ \  \ y\in T_x B(\rho)\equiv R^n, \]
where $\phi =\phi(r, s)$ is a positive smooth function defined on $[0, \rho)\times (-\rho,\rho)$.  Finsler  metrics in this form are called {\it spherically symmetric metrics}.  For example,
 the  well-known Funk metric on $B^n(1)\subset R^n$ is projectively flat with constant flag curvature $K=-1/4$. $\phi=\phi(r,s)$  is given by
\be
\phi = \frac{ \sqrt{ 1-(r^2-s^2) } + s}{1-r^2}. \label{phi_Funk}
\ee
Using the  above Funk metric, one can construct another projectively flat metric on $B^n(1)$ with zero flag curvature  $K=0$ (due to L. Berwald). \\

$\phi=\phi(r,s)$ is given by
\be
\phi= \frac{(\sqrt{ 1-(r^2- s^2) } + s)^2 }{ (1-r^2)^2\sqrt{ 1-(r^2-s^2) }}. \label{phi_Berwald}
\ee
One can also construct a projectively flat metric with constant flag curvature $K=-1$ (\cite{Sh1}). $\phi=\phi(r,s)$ is given by
\be
\phi= {1\over 2} \Big \{ { \sqrt{1 -
(r^2-s^2) }+ s \over 1- r^2} - \e { \sqrt{1 -\e^2
(r^2-s^2) }+\e s \over 1- \e^2r^2}
\Big \},\label{phi_Shen}
\ee
where  $-1\leq \e < 1$ is a constant.  They are all spherically symmetric metrics with constant flag curvature.

Recently, Mo-Zhou-Zhu  finds three equations that characterize  spherically symmetric metrics of constant curvature and find some new locally projectively flat  metrics of constant flag curvature (\cite{Mo}).  In this paper, we shall show  that these three equations can be reduced to two equations (Theorems \ref{thm1.2} and \ref{thm1.4} below).

It is also a natural problem to study  spherically  symmetric metrics with constant Ricci curvature.  We find one equation that characterizes spherically symmetric metrics of constant Ricci curvature (Theorem \ref{thm1.1} below) and two equations that characterize  those of constant Ricci curvature tensor (Theorem \ref{thm1.3} below).

To state our results, we introduce the following notations.
 For a positive smooth function $\phi =\phi(r,s)$ on $[0, \rho)\times (-\rho, \rho)$, let
\begin{eqnarray*}
R_1: & = & P^2-\frac{1}{r}( sP_r+rP_s) + 2Q [ 1+sP+(r^2-s^2)P_s] \\
R_2: & = &  \frac{1}{r}(2Q_r-sQ_{rs}-rQ_{ss})+2Q(2Q-sQ_s) + (r^2-s^2)(2QQ_{ss}-[Q_s]^2) ,\\
R_3: & = & \frac{1}{r} ( P_r-sP_{rs}-rP_{ss}) + 2Q [ 1+ sP+(r^2-s^2) P_s]_s,
\end{eqnarray*}
where
\begin{eqnarray*}
P  &  =  &  \frac{1}{2r\phi} ( s\phi_r+r\phi_s) -\frac{Q}{\phi} \{ s\phi + (r^2-s^2) \phi_s \}\\
Q & = &  \frac{1}{2r} \frac{s\phi_{rs}+ r\phi_{ss}-\phi_{r}}{\phi-s\phi_{s} +(r^2-s^2)\phi_{ss}}.
\end{eqnarray*}
Note that for the above three functions, $\phi=\phi(r,s)$, in (\ref{phi_Funk}), (\ref{phi_Berwald}) and (\ref{phi_Shen}),  $Q=0$, i.e.,  $ s\phi_{rs}+ r\phi_{ss}-\phi_{r}=0.$  In this case, $R_i's$ can be simplified further.

In this paper, we shall prove the following

\begin{thm} \label{thm1.1} Let $F = |y| \phi (r,s )$ be a spherically symmetric Finsler metric where $r:=|x| $ and $s:=\frac{\langle x, y \rangle}{|y|}$.
Then $Ric=(n-1)K F^2$ ($K=constant$)    if and only if   $\phi$ satisfies the  PDE below:
\begin{equation}
 (n-1)K\phi^2=  (n-1)R_1 + (r^2-s^2)   R_2.  \label{Eqn-MainThm}
\end{equation}
\end{thm}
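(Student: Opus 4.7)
The plan is to derive a closed-form expression for the Ricci curvature of $F=|y|\phi(r,s)$ in terms of $\phi$ and its partials, then match this expression against the right-hand side of \eqref{Eqn-MainThm}. Since each step of the reduction is reversible, this will establish both implications simultaneously.

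I would first record the spray. By the $O(n)$-invariance of $F$ and the $2$-homogeneity of $G^i$ in $y$, the geodesic coefficients must take the form
\[
G^i \;=\; u\, P(r,s)\, y^i + u^2\, Q(r,s)\, x^i, \qquad u := |y|,
\]
for two scalar functions $P$ and $Q$ of $(r,s)$. Substituting this ansatz into
\[
G^i = \tfrac{1}{4} g^{il}\{[F^2]_{x^k y^l}\, y^k - [F^2]_{x^l}\},
\]
and inverting $g_{ij}$ --- which splits into a rank-one piece along $y^i$ and an angular piece on $y^\perp$ --- identifies $P$ and $Q$ with the specific formulas stated just before the theorem. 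In particular the denominator $\phi - s\phi_s + (r^2-s^2)\phi_{ss}$ in $Q$ emerges as the angular eigenvalue of the fundamental tensor.

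Next, I would compute the Riemann curvature from
\[
R^i{}_k \;=\; 2\,\partial_{x^k} G^i - y^j\,\partial_{x^j}\partial_{y^k} G^i + 2 G^j\,\partial_{y^j}\partial_{y^k} G^i - \partial_{y^j}G^i\,\partial_{y^k} G^j,
\]
using the ansatz above together with the chain rules $\partial_{x^j} r = x^j/r$, $\partial_{x^j} s = y^j/u$, $\partial_{y^j} r = 0$, $\partial_{y^j} s = x^j/u - s y^j/u^2$, $\partial_{y^j} u = y^j/u$. Each of the four pieces expands into a linear combination of the five tensors $\delta^i_k$, $x^i x^k$, $y^i y^k$, $x^i y^k$, $y^i x^k$, with coefficients polynomial in $P, Q$ and their partials of order $\le 2$. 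Contracting $i=k$ via $\delta^m_m = n$, $x^m x^m = r^2$, $y^m y^m = u^2$, $x^m y^m = u s$ collapses these five tensorial components into a single scalar, and after regrouping I expect the identity
\[
Ric \;=\; u^2\bigl\{ (n-1) R_1 + (r^2-s^2) R_2 \bigr\}.
\]
Setting $Ric = (n-1) K F^2 = (n-1) K u^2 \phi^2$ and cancelling $u^2$ yields \eqref{Eqn-MainThm}.

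The main obstacle is the algebraic bookkeeping: one must verify that the four contributions to $R^m{}_m$ combine exactly into $(n-1) R_1 + (r^2-s^2) R_2$. The factor $n-1$ rather than $n$ is expected to arise because a $y^i y^k$-piece of $R^i{}_k$ carries $-R_1/u^2$ and, upon tracing, subtracts one unit from the coefficient $n R_1$ produced by the $\delta^i_k$-piece. The third scalar $R_3$ that enters the companion constant flag curvature theorems does not contribute here, because it multiplies a tensorial piece of $R^i{}_k$ that is traceless and is therefore invisible to the Ricci scalar; this is the structural reason why the Ricci condition is captured by a single PDE while the flag curvature condition requires two.
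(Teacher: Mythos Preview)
Your approach is correct and coincides with the paper's: the paper quotes the decomposition $R^i{}_j = R_1(|y|^2\delta^i_j - y^iy^j) + |y|R_2(|y|x^j - sy^j)x^i + R_4(|y|x^j - sy^j)y^i$ from \cite{Mo} and simply traces it, obtaining $Ric = (n-1)|y|^2 R_1 + (r^2-s^2)|y|^2 R_2$ exactly as you outline (the $R_4$-piece, which carries $R_3$, is traceless because $y^m(|y|x^m - sy^m)=0$). The only difference is that you propose to rederive the Riemann curvature formula from the spray rather than cite it; the paper skips this and attributes the formula to Mo--Zhou--Zhu.
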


In (\ref{Eqn-MainThm}), if $R_2=0$, then (\ref{Eqn-MainThm}) is reduced to that $K\phi^2=R_1$. In this case, $F$ is actually of constant flag curvature $K$.

\begin{thm}\label{thm1.2}
Let $F = |y| \phi(r,s)$ be a \ssfm ~ on an open ball  $B^n(\rho)\subset R^n$ ($n\geq 3$).   Then $F$ is of \cfc ~$K$ if and only if
\be
R_1 = K \phi^2, \ \ \ \  \ R_2=0, \label{R1R2}
\ee
\end{thm}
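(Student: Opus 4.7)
The plan is to exploit the full tensorial form of the constant flag curvature equation, not merely its trace. For the necessity direction, assume $F$ has constant flag curvature $K$. Tracing $R^i_{\ k}=K\{F^2\delta^i_k-g_{kl}y^l y^i\}$ gives $\mathrm{Ric}=(n-1)KF^2$, so Theorem~\ref{thm1.1} yields
$$(n-1)K\phi^2 = (n-1)R_1 + (r^2-s^2)R_2.$$
To promote this single scalar identity to the two separate ones $R_1=K\phi^2$ and $R_2=0$, I would compute $R^i_{\ k}$ directly. Rotational invariance of the spherically symmetric metric forces $R^i_{\ k}$ to be a combination of the $\mathrm{SO}(n)$-equivariant tensors built from $\delta^i_k$, $y^i$, $x^i$ and the metric, and the identity $R^i_{\ k}y^k=0$ (automatic for the Jacobi endomorphism of any Finsler metric) cuts the number of independent scalar coefficients down to two. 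Using the spray coefficients $G^i = |y|^2(Py^i/|y|+Qx^i)$ together with the Berwald formula
$$R^i_{\ k} = 2\frac{\partial G^i}{\partial x^k} - \frac{\partial^2 G^i}{\partial x^j\,\partial y^k}y^j + 2G^j\frac{\partial^2 G^i}{\partial y^j\,\partial y^k} - \frac{\partial G^i}{\partial y^j}\frac{\partial G^j}{\partial y^k},$$
one computes these two scalar coefficients and identifies them, after normalization, with $R_1$ (the ``isotropic'' scalar, matching the $\delta^i_k$-piece) and $R_2$ (the ``anisotropic'' scalar, picking up the radial direction $\xi^i := x^i-(s/|y|)y^i$ orthogonal to $y^i$).

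Matching these two scalars against $K\{F^2\delta^i_k - g_{kl}y^l y^i\}$ then forces the isotropic coefficient to equal $KF^2$ and the anisotropic one to vanish, giving $R_1=K\phi^2$ and $R_2=0$ separately. For sufficiency, feeding these two identities back into the decomposition of $R^i_{\ k}$ reconstructs the right-hand side of the flag curvature equation exactly, so $F$ has constant flag curvature $K$. In particular, the Ricci-level relation from Theorem~\ref{thm1.1} is recovered by taking the trace and serves as a consistency check.

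The hypothesis $n\geq 3$ enters at the step of separating the two independent scalar components of $R^i_{\ k}$: the anisotropic tensor built from $\xi^i$ is linearly independent of $\delta^i_k$ only when there exists a direction orthogonal to both $x$ and $y$, which fails in dimension two. The main obstacle is the explicit expansion of the Berwald formula for $R^i_{\ k}$ in terms of $P$ and $Q$ and the identification of its two independent scalar invariants with $R_1$ and a multiple of $R_2$; once that expansion is in place, Theorem~\ref{thm1.2} reduces to matching coefficients against the flag curvature equation.
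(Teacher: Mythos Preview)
Your approach is different from the paper's and is in principle workable, but there is a gap in the counting argument. You claim that rotational invariance together with $R^i_{\ k}y^k=0$ cuts the number of independent scalar coefficients in $R^i_{\ k}$ down to two. In fact it cuts it to three: the equivariant ansatz $R^i_{\ k}=A\,\delta^i_k+B\,y^iy_k+C\,y^ix_k+D\,x^iy_k+E\,x^ix_k$ has five scalars, and $R^i_{\ k}y^k=0$ imposes only two scalar relations (the $y^i$- and $x^i$-components of the resulting vector), leaving three. This is exactly what formula~(\ref{Rij}) exhibits: the scalars $R_1$, $R_2$, and a third coefficient $R_4$ attached to $(|y|x_k-sy_k)y^i$. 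Consequently your sufficiency step is incomplete: feeding $R_1=K\phi^2$ and $R_2=0$ into (\ref{Rij}) does not by itself reproduce $K\{F^2\delta^i_k-g_{kl}y^ly^i\}$; you must also verify $R_4=-K\phi\phi_s$. The missing ingredient is the $g$-symmetry $g_{il}R^l_{\ k}=g_{kl}R^l_{\ i}$ of the Jacobi endomorphism, which yields identity~(\ref{Req2}) expressing $R_4$ in terms of $R_1$ and $R_2$; once that is invoked, your direct matching argument goes through.

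The paper's proof of sufficiency is quite different and avoids the full tensorial matching: $R_2=0$ forces $F$ to be of scalar flag curvature (Lemma~\ref{Lemma2.1}, which is where $n\geq 3$ enters), and then $R_1=K\phi^2$ together with $R_2=0$ and the trace formula~(\ref{Ric1}) give $Ric=(n-1)KF^2$; a metric of scalar flag curvature with constant Ricci curvature has constant flag curvature equal to that constant. Necessity is read off from the Mo--Zhou--Zhu characterization (Theorem~\ref{MO.10.13-thm1.1x}). Your computational route, once patched with~(\ref{Req2}), is self-contained and makes transparent why the third equation $R_3=0$ of Theorem~\ref{MO.10.13-thm1.1x} becomes redundant; the paper's route is shorter but leans on the cited external results.
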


Theorem \ref{thm1.2} improves a result in \cite{Mo} (see Theorem \ref{MO.10.13-thm1.1x} below).

There is a notion of Ricci curvature tensor $Ric_{ij}$ introduced   in \cite{LiSh1}.
\be
Ric_{ij} := \frac{1}{2} \Big \{   R^{\ m}_{i \ mj} + R^{\ m}_{j \ mi}\Big \}, \label{Ric_ij}
\ee
where $R^{\ i}_{j \ kl}$ denotes the Riemann curvature tensor of the Berwald connection.
Note that
\be
 Ric = Ric_{ij} y^iy^j.\label{RicRic_ij}
\ee
 By (\ref{RicRic_ij}), we see that $Ric_{ij} =(n-1) K g_{ij}$ implies that $Ric = (n-1) K F^2$.  We have the following

\begin{thm}\label{thm1.3}
  Let $F = |y| \phi (r,s )$  be a spherically symmetric Finsler metric where $r:=|x| $ and $s:=\frac{\langle x, y \rangle}{|y|}$.
Then
 $Ric_{ij}=(n-1)K g_{ij}$ ($K=constant$)   if and only if   $\phi$ satisfies
\be
(n-1) K \phi^2 =(n-1)R_1+(r^2-s^2)R_2, \ \ \ \  (n+1) R_3 +(r^2-s^2) [R_2]_s=0.  \label{thm1.2eq}
\ee
\end{thm}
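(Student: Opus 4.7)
My plan is to reduce the tensor equation $Ric_{ij}=(n-1)Kg_{ij}$ to two independent scalar PDE's by exploiting the spherically symmetric structure; one of these will be the Ricci-scalar equation from Theorem~\ref{thm1.1} and the other is new.

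\emph{First equation.} Necessity is almost immediate. Contracting $Ric_{ij}=(n-1)Kg_{ij}$ with $y^iy^j$ and using (\ref{RicRic_ij}) together with $g_{ij}y^iy^j=F^2=|y|^2\phi^2$ gives $Ric=(n-1)KF^2$, and Theorem~\ref{thm1.1} then produces $(n-1)K\phi^2=(n-1)R_1+(r^2-s^2)R_2$ directly. The real content of Theorem~\ref{thm1.3} is therefore the second equation, which must encode the tensorial information in $Ric_{ij}=(n-1)Kg_{ij}$ that is not visible in the scalar trace.

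\emph{Computing $R^i_{\ k}$ and extracting $Ric_{ij}$.} Starting from the standard formula for the spray coefficients $G^i$ of $F=|y|\phi(r,s)$ (as used in the proof of Theorem~\ref{thm1.1} and in \cite{Mo}), I would compute the Riemann tensor $R^i_{\ k}$ explicitly. For a spherically symmetric metric, $R^i_{\ k}$ decomposes into a sum of a few universal tensors built from $\delta^i_k$, $y^i$, $y_k$, $x^i$, $x_k$, $r$, $s$, whose scalar coefficients are built from $R_1$, $R_2$, and (through $y$-differentiation) $R_3$. This extends the flag-curvature setup of \cite{Mo}, where the conditions $R_1=K\phi^2$, $R_2=0$, $R_3=0$ already suffice. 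From this decomposition, $Ric_{ij}$ is obtained algebraically via (\ref{Ric_ij}) by taking two $y$-derivatives of $R^i_{\ k}$ and symmetrizing in $i,j$; the result is a tensor of the same universal type as $g_{ij}$ itself, namely a linear combination of $\delta_{ij}$, $y_iy_j$, $x_iy_j+x_jy_i$, and $x_ix_j$ with scalar coefficients in $r,s$.

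\emph{Splitting and matching.} Matching $Ric_{ij}$ against $(n-1)Kg_{ij}$ coefficient by coefficient then separates into two independent scalar PDE's. The $y_iy_j$-component reproduces the already-obtained Ricci-scalar equation, and the remaining trace-free component yields $(n+1)R_3+(r^2-s^2)[R_2]_s=0$, thereby completing both implications of Theorem~\ref{thm1.3}.

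\emph{Main obstacle.} The principal difficulty is purely bookkeeping: two $y$-derivatives of $R^i_{\ k}$ followed by symmetrization and a clean separation into tensorially independent pieces produces a large number of terms that must be organized carefully. The factor $(n+1)$ in the second equation is a trace-dimension artifact whose precise origin will emerge only after this algebra is carried out; isolating the trace-free component cleanly, without contamination from the $y_iy_j$-component, is the delicate step.
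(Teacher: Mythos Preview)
Your strategy is sound and would eventually yield the two equations, but it is genuinely different from the paper's argument and considerably heavier. The paper avoids the direct tensor-matching you propose by invoking the general identity
\[
Ric_{ij}=\tfrac{1}{2}[Ric]_{y^iy^j}+H_{ij},
\]
which together with $g_{ij}=\tfrac{1}{2}[F^2]_{y^iy^j}$ shows immediately that $Ric_{ij}=(n-1)Kg_{ij}$ is equivalent to the pair $Ric=(n-1)KF^2$ and $H_{ij}=0$. The first condition is Theorem~\ref{thm1.1}; for the second the paper uses Lemma~\ref{Lemma2.5} (for spherically symmetric metrics $H_{ij}=0\Leftrightarrow\chi_i=0$) and then the explicit formula $\chi_i=-\tfrac{1}{2}\{(n+1)R_3+(r^2-s^2)[R_2]_s\}(|y|x^i-sy^i)$ from (\ref{chiR1R2R4}). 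The whole proof is three lines.

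What each approach buys: the paper's route explains structurally why exactly two scalar equations appear (one from $Ric$, one from $\chi$) and why the coefficient is $n+1$ --- it comes from the trace formula (\ref{chi_iR}) for $\chi_i$, not from matching tensor components. Your route, by contrast, would produce four scalar equations (coefficients of $\delta_{ij}$, $y_iy_j$, $x_{(i}y_{j)}$, $x_ix_j$) and you would then have to argue that two are consequences of the other two; you gloss over this reduction when you say ``the $y_iy_j$-component reproduces the Ricci-scalar equation and the remaining trace-free component yields the second.'' That reduction is not automatic and is exactly where the $H$-curvature decomposition does the work for free. Also note that $Ric_{ij}$ is not literally two $y$-derivatives of $R^i_{\ k}$; recovering $R^{\ m}_{i\ mj}$ from $R^i_{\ k}$ requires the Berwald-connection identities, which is another layer of bookkeeping the paper sidesteps.
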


Assume that (\ref{thm1.2eq}) holds. If  $R_2=0$, then $R_3=0$ and $K\phi^2=R_1$, thus $F$ is of constant flag curvature. In fact that the condition $R_2=0$ and $R_3=0$ is sufficient for $F$ to be of constant flag curvature.  We have the following

\begin{thm}\label{thm1.4}
Let $F = |y| \phi(r,s)$ be a \ssfm ~ on an open ball  $B^n(\rho)\subset R^n$ ($n\geq 3$).   Then $F$ is of constant flag curvature if and only if
\[    R_2 =0, \ \ \ \ \ R_3=0.\]
\end{thm}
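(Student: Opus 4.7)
The plan is to derive Theorem~\ref{thm1.4} by pairing Theorem~\ref{thm1.2} (constant flag curvature $K$ $\iff$ $R_1=K\phi^2$ and $R_2=0$) with Theorem~\ref{thm1.3} (the Ricci-tensor identity $Ric_{ij}=(n-1)K g_{ij}$ $\iff$ the system \eqref{thm1.2eq}). The decisive observation is that the pair $R_2=0$, $R_3=0$ encodes the same geometric content as the conditions of Theorem~\ref{thm1.2}, provided one can show that the pointwise ratio $R_1/\phi^2$ is forced to be a constant.

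For the forward implication, suppose $F$ has constant flag curvature $K$. Theorem~\ref{thm1.2} gives $R_1=K\phi^2$ and $R_2=0$ at once. A Finsler metric of constant flag curvature $K$ also satisfies the stronger Ricci-tensor identity $Ric_{ij}=(n-1)K g_{ij}$; this is standard and can be seen by recovering the Berwald Riemann tensor $R^{\ i}_{j\ kl}$ from the explicit form of $R^i_{\ k}$ by $y$-symmetrization and contracting, exactly as in the Riemannian case. Hence Theorem~\ref{thm1.3} is applicable. Substituting $R_2=0$ (and therefore $[R_2]_s=0$) into the second equation of \eqref{thm1.2eq} yields $(n+1)R_3=0$, i.e., $R_3=0$.

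For the converse, assume $R_2=0$ and $R_3=0$. The second equation of \eqref{thm1.2eq} is automatic. Setting $K(r,s):=R_1/\phi^2$, the first equation of \eqref{thm1.2eq} holds tautologically, but Theorem~\ref{thm1.3} demands that $K$ be a \emph{constant}. So the entire converse reduces to the single claim: under $R_2=0$ and $R_3=0$, the function $R_1/\phi^2$ is independent of both $r$ and $s$. Once this is established, Theorem~\ref{thm1.3} produces $Ric_{ij}=(n-1)Kg_{ij}$, and a final appeal to Theorem~\ref{thm1.2} (with the same constant $K$ and $R_2=0$) delivers the constant flag curvature $K$.

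The main obstacle is therefore establishing the constancy of $R_1/\phi^2$. The plan is to compute $\partial_s(R_1/\phi^2)$ and $\partial_r(R_1/\phi^2)$ directly from the definitions of $R_1,P,Q$ in terms of $\phi$, and to cancel the results using $R_2=0$ and $R_3=0$. The structural clue in the $s$-direction is that $R_3$ and $\partial_s R_1$ share the combinations $P_{rs}$, $P_{ss}$, and $[1+sP+(r^2-s^2)P_s]_s$; the difference between them is expected to collapse, after invoking the defining relations for $P$ and $Q$, into a linear combination of $R_2$ and $[R_2]_s$, both of which vanish by hypothesis, so that $\partial_s(R_1/\phi^2)=0$. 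For $\partial_r(R_1/\phi^2)$ the analogous cancellation is arranged by differentiating the definition of $P$ and using $R_2=0$. The algebra is long but mechanical; the dimension hypothesis $n\geq 3$ enters only through the auxiliary Theorems~\ref{thm1.2} and~\ref{thm1.3}.
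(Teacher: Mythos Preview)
Your forward direction is fine and matches the paper. The converse, however, has a real gap. Your plan hinges on showing that $R_1/\phi^2$ is independent of \emph{both} $s$ and $r$ by direct differentiation of the defining formulas for $R_1,P,Q$. For the $s$-derivative this is feasible, but the clean mechanism is not the one you describe: the paper uses the identity $R_4+sR_2=-\frac{\phi_s}{\phi}\{(r^2-s^2)R_2+R_1\}$ (equation~\eqref{Req2}), which encodes the Riemann-tensor symmetry $g_{ik}R^i_{\ j}=g_{ij}R^i_{\ k}$, together with $R_4=\frac{1}{2}(3R_3-[R_1]_s)$ (equation~\eqref{Req1}). Under $R_2=R_3=0$ these give $-\tfrac{1}{2}[R_1]_s=-\frac{\phi_s}{\phi}R_1$, i.e.\ $[R_1/\phi^2]_s=0$, in one line. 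Your proposed route (``collapse into a linear combination of $R_2$ and $[R_2]_s$'') would amount to rediscovering \eqref{Req2} by brute force; it is not a matter of just reading off the definitions of $P$ and $Q$.

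The serious problem is the $r$-direction. The claim that $\partial_r(R_1/\phi^2)=0$ follows by ``differentiating the definition of $P$ and using $R_2=0$'' is not substantiated, and in fact the paper does \emph{not} obtain this algebraically: after establishing $[R_1/\phi^2]_s=0$ it only concludes $R_1=K(r)\phi^2$, deduces isotropic flag curvature, and then invokes the Schur Lemma (which rests on the second Bianchi identity, not on the pointwise formula for $R^i_{\ k}$) to force $K$ constant. Any direct proof of $\partial_r(R_1/\phi^2)=0$ would be a disguised Schur-type argument and requires an additional differential identity you have not produced. Note also that once $R_1=K\phi^2$ with $K$ constant and $R_2=0$, Theorem~\ref{thm1.2} applies directly; the detour through Theorem~\ref{thm1.3} is unnecessary. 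Finally, the paper's primary proof of the converse avoids all of this computation: $R_2=0$ gives scalar flag curvature (Lemma~\ref{Lemma2.1}), $R_2=R_3=0$ gives $\chi_i=0$ via \eqref{chiR1R2R4}, and Lemma~\ref{Lemma2.3} then yields constant flag curvature.
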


%More  comments here..

\section{Preliminaries}
 Let $F = |y| \phi (r,s )$  be a spherically symmetric Finsler metric  on $R^n$, where $r:=|x| $ and $s:=\frac{\langle x, y \rangle}{|y|}$.
According to  Mo-Zhou-Zhu (\cite{Mo}),   the Riemann curvature tensor  $R^i_{\ j} $ is given by

\begin{align} \label{Rij}
R^i_{\ j} &=   R_1 ( |y|^2\delta^i_j -y^iy^j) +  |y| R_2 ( |y| x^j-sy^j)x^i   + R_4 (|y|x^j -s y^j ) y^i,
\end{align}
where  $R_1, R_2, R_3$ are given in the introduction above and $R_4$ is given by

\begin{align} \label{Req1}
R_4  &:=   \frac{1}{2} \{ 3 R_3 - [R_1]_s\},
\end{align}

Note that $R_3$ here is not the $R_3$ in \cite{Mo} and $R_1, R_2, R_4$ are the same terms as in \cite{Mo}.
Using the identity $g_{ik} R^i_{\ j} = g_{ij}R^i_{\ k} $, we  immediately obtain

\begin{align} \label{Req2}
R_4 + s R_2 &=    -\frac{\phi_s}{\phi}  \Big \{ (r^2-s^2) R_2 + R_1 \Big \},
\end{align}

Recall  that  $F$ is  of scalar flag curvature if and only if $R^i_{\ j} = R \delta^i_j - \tau_j y^i$ with $\tau_j y^j = R$. Thus it is easy to see from (\ref{Rij}) that $F = |y| \phi(r, s)$ is of scalar flag curvature if and only if $R_2 =0$.

\begin{lem}\label{Lemma2.1} {\rm (\cite{HM})} Let $F = |y| \phi(r,s)$ be a \ssfm ~ on a ball  $B^n(\rho)\subset R^n$ ($n\geq 3$). Then $F$ is of scalar flag curvature if and only if $R_2=0$.
\end{lem}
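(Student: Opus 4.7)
The plan is to read off the lemma directly from the explicit formula (\ref{Rij}) for $R^i_{\ j}$. The scalar flag curvature condition requires $R^i_{\ j}$ to have the form $R\,\delta^i_j - \tau_j y^i$, i.e.\ a multiple of the identity plus a rank-one tensor whose contravariant factor is $y^i$. In (\ref{Rij}) the $R_1$- and $R_4$-terms are already of this shape, so the only obstruction is the middle term, whose contravariant factor is $x^i$ rather than $y^i$. Establishing the equivalence therefore amounts to showing that this $x^i$-direction cannot be absorbed into the $\delta^i_j$ and $y^i$ parts.

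For the easy direction, setting $R_2=0$ in (\ref{Rij}) gives
\[
R^i_{\ j} \;=\; R_1|y|^2\,\delta^i_j \;-\; \bigl[\,R_1 y^j - R_4(|y|x^j - sy^j)\,\bigr]\,y^i .
\]
Taking $R:=R_1|y|^2$ and $\tau_j := R_1 y^j - R_4(|y|x^j - sy^j)$, one verifies $\tau_j y^j = R$ using $y^j(|y|x^j - sy^j) = |y|\langle x,y\rangle - s|y|^2 = 0$, so $F$ has scalar flag curvature.

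For the converse, assume $R^i_{\ j}=R\,\delta^i_j-\tau_j y^i$. Subtracting from (\ref{Rij}) yields an identity of the shape
\[
A\,\delta^i_j \;+\; B_j\,y^i \;+\; C_j\,x^i \;=\; 0,
\]
with $A := R_1|y|^2 - R$ a scalar, $B_j$ a covector, and $C_j = |y|R_2\,(|y|x^j - sy^j)$. For each fixed $j$ this reads, as a vector equation in $R^n$, $A\,e_j + B_j\,y + C_j\,x = 0$. The hypothesis $n\geq 3$ enters here: when $x$ and $y$ are linearly independent, $\mathrm{span}(x,y)$ is a proper $2$-plane, so some standard basis vector $e_{j_0}$ lies outside it, forcing $A = B_{j_0} = C_{j_0} = 0$. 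Since $A$ does not depend on $j$, we get $A\equiv 0$; and then $B_j y + C_j x = 0$ together with the linear independence of $x$ and $y$ gives $B_j = C_j = 0$ for all $j$. In particular $R_2\cdot(|y|x^j - sy^j) = 0$, and since $|y|x - sy \neq 0$ whenever $x\not\parallel y$, we conclude $R_2(r,s)=0$ on the open set $\{|s|<r\}$; continuity then yields $R_2\equiv 0$ on the whole domain.

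The argument is conceptual rather than computational: once one observes that (\ref{Rij}) contains exactly one ``bad'' direction $x^i$ weighted by $R_2$, the problem collapses to a short linear-algebra exercise. The only subtle point is the role of $n\geq 3$, which is precisely what makes $\mathrm{span}(x,y)$ a proper subspace and allows us to isolate the $x^i$ direction from $y^i$ and $\delta^i_j$; for $n=2$ any third vector lies in $\mathrm{span}(x,y)$ and the isolation step fails.
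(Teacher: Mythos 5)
Your proposal is correct and follows essentially the same route as the paper, which (while attributing the lemma to \cite{HM}) justifies it by the remark preceding the statement: reading the scalar flag curvature condition $R^i_{\ j}=R\,\delta^i_j-\tau_j y^i$, $\tau_j y^j=R$, directly off formula (\ref{Rij}) and observing that the $|y|R_2(|y|x^j-sy^j)x^i$ term is the only obstruction. Your write-up simply makes explicit the linear-algebra step (isolating the $x^i$ direction when $n\geq 3$) that the paper leaves as ``it is easy to see.''
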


By the above formula (\ref{Rij}), Mo-Zhou-Zhu \cite{Mo}  prove the following
%
%  Theorem from MO
%
\begin{thm} (\cite{Mo}) \label{MO.10.13-thm1.1x}
 Let $F = |y| \phi(r,s)$ be a \ssfm ~ on a ball  $B^n(\rho)\subset R^n$ ($n\geq 3$). Then $F$ is of \cfc ~$K$ if and only if

\begin{align}
R_{1}   & =      K \phi^{2}, \label{CC-eq1x} \\
R_{2}   & =    0,   \label{CC-eq3x}\\
R_{3}   & =   0,  \label{CC-eq2x}
\end{align}
where $R_1, R_2$ and $ R_3$ are given as above.
\end{thm}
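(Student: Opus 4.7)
The plan is to handle both directions using the auxiliary identities for $R_4$ derived in Section 2. The forward direction is immediate: if $F$ has constant flag curvature, Theorem \ref{MO.10.13-thm1.1x} yields $R_1=K\phi^2$, $R_2=0$, $R_3=0$, so in particular $R_2=0$ and $R_3=0$. For the converse, I would assume $R_2=0$ and $R_3=0$ and manufacture a constant $K$ for which $F$ has flag curvature $K$.

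First, I would apply Lemma \ref{Lemma2.1}: the hypothesis $R_2=0$ alone already forces $F$ to have scalar flag curvature, so there is a scalar function $K(x,y)$ with $R^i_{\ j}=K\{F^2\delta^i_j-g_{jl}y^ly^i\}$. Taking the trace over $i=j$ in the curvature formula (\ref{Rij}) with $R_2=0$ gives $Ric=(n-1)R_1|y|^2$, while the scalar-flag-curvature form gives $Ric=(n-1)KF^2=(n-1)K\phi^2|y|^2$. Hence $K=R_1/\phi^2$, and it remains to show that this quotient is a constant.

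Next, I would exploit the two identities for $R_4$ available from Section 2. Substituting $R_3=0$ into (\ref{Req1}) gives $R_4=-\tfrac{1}{2}[R_1]_s$, while substituting $R_2=0$ into (\ref{Req2}) gives $R_4=-(\phi_s/\phi)R_1$. Equating these two expressions produces $[R_1]_s/R_1=2\phi_s/\phi$, that is, $\partial_s\log(R_1/\phi^2)=0$. Therefore $R_1/\phi^2$ is independent of $s$, so $K=K(r)$ depends only on $r=|x|$ and in particular is independent of the direction $y$.

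The final step is to invoke the Finslerian Schur lemma: on a connected manifold of dimension $n\geq 3$, a metric of scalar flag curvature whose flag curvature depends on $x$ only is automatically of constant flag curvature. Since $B^n(\rho)$ is connected, $n\geq 3$, and $K=K(r)$ is a function of $x$ alone, this forces $K$ to be constant, which is the desired conclusion. The most delicate point, I expect, is the middle step: recognizing that the two seemingly independent formulas for $R_4$ collapse, when $R_2=R_3=0$, into the single integrability relation $\partial_s(R_1/\phi^2)=0$ that pins down the $s$-dependence; once this is in hand, Schur's lemma removes the residual $r$-dependence for free.
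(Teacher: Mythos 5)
Your converse direction is sound, but your forward direction is circular: to show that constant flag curvature implies (\ref{CC-eq1x})--(\ref{CC-eq2x}) you invoke Theorem \ref{MO.10.13-thm1.1x} itself, which is exactly the statement being proved. A genuine forward argument has to come from the curvature formula (\ref{Rij}): writing the constant-flag-curvature condition as $R^i_{\ j}=K\{F^2\delta^i_j-g_{jl}y^ly^i\}$ and noting that $g_{jl}y^l=\phi^2y^j+\phi\phi_s(|y|x^j-sy^j)$, one compares coefficients in (\ref{Rij}) --- the $\delta^i_j$-part gives $R_1=K\phi^2$, the term carrying the factor $x^i$ must vanish wherever $x$ and $y$ are linearly independent, giving $R_2=0$, and then (\ref{Req2}) yields $R_4=-\frac{\phi_s}{\phi}R_1=-\frac{1}{2}[K\phi^2]_s=-\frac{1}{2}[R_1]_s$, which combined with (\ref{Req1}) forces $R_3=0$. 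None of this is supplied in your write-up, and it is the part of the theorem that actually requires the structure of (\ref{Rij}).

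For the record, the paper does not prove this theorem at all: it is imported verbatim from \cite{Mo}, and the formula (\ref{Rij}) together with the identities (\ref{Req1}), (\ref{Req2}) is all the paper records about its provenance. What you have written for the converse --- use $R_2=0$ and Lemma \ref{Lemma2.1} to get scalar flag curvature, equate the two expressions for $R_4$ from (\ref{Req1}) and (\ref{Req2}) to obtain $[R_1/\phi^2]_s=0$, and finish with the Schur lemma --- is in substance the paper's proof of Theorem \ref{thm1.4}, and it is correct (indeed it proves the stronger statement that $R_2=0$ and $R_3=0$ alone suffice; under the additional hypothesis $R_1=K\phi^2$ the resulting constant is the given $K$). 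One small point of hygiene: rather than dividing by $R_1$ to write $[R_1]_s/R_1=2\phi_s/\phi$, conclude directly from $\frac{1}{2}[R_1]_s=\frac{\phi_s}{\phi}R_1$ that $[R_1/\phi^2]_s=0$, which avoids any assumption that $R_1$ is nonvanishing.
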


In fact, two equations (\ref{CC-eq1x}) and (\ref{CC-eq3x}) in Theorem \ref{MO.10.13-thm1.1x} will be sufficient (Theorem \ref{thm1.2} above).

\bigskip
There is an important non-Riemannian quantity, $\chi=\chi_i dx^i$,   defined by the S-curvature \cite{Sh}.
\[  \chi_i :=\frac{1}{2}\Big \{  S_{\cdot i |m}y^m-S_{|i}  \Big \} ,\]
where $S$ denotes the S-curvature of $F$  with respect to the Busemann-Hausdorff volume.
It can be also expressed in terms of the Riemann curvature  $R^i_{\ k}= R^{\ i}_{j\ kl}y^jy^l$  by

\begin{align}\label{chi_iR}
\chi_i  & =     -\frac{1}{6} \Big \{  2R^m_{\ i\cdot m} + R^m_{\ m\cdot i} \Big \},
\end{align}

where $``\cdot ''$ denotes the vertical covariant  derivative.  The importance of this $\chi$-curvature lies in the following

\begin{lem} \label{Lemma2.3}
{\rm (\cite{Sh})} For a Finsler metric of scalar flag curvature on an $n$-dimensional manifold, $\chi_i=0$ if and  only if the flag curvature is isotropic (constant if $n\geq 3$).
\end{lem}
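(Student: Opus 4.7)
The plan is to substitute the scalar flag curvature form $R^i_{\ k}= K(F^2\delta^i_k - g_{kl}y^l y^i)$, with $K=K(x,y)$ positively $0$-homogeneous in $y$, into the formula $\chi_i = -\tfrac{1}{6}(2R^m_{\ i\cdot m}+R^m_{\ m\cdot i})$ from (\ref{chi_iR}), and extract an identity expressing $\chi_i$ purely in terms of the vertical derivatives $K_{\cdot i}$ of the flag curvature function.

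First I would compute $R^m_{\ i\cdot m}$ by the Leibniz rule. Writing $h^m_{\ i}:= F^2\delta^m_i - g_{il}y^l y^m$ so that $R^m_{\ i}=K h^m_{\ i}$, one has $R^m_{\ i\cdot m} = K_{\cdot m} h^m_{\ i} + K\, h^m_{\ i\cdot m}$. Using the standard identities $g_{il}y^l = FF_{y^i}$, $y^m F_{y^m}=F$, and $(g_{il})_{\cdot m}y^m = 0$ (vanishing of the Cartan tensor on $y$), a direct calculation collapses $h^m_{\ i\cdot m}$ to $-(n-1)FF_{y^i}$. Separately, vertically differentiating $R^m_{\ m}=(n-1)KF^2$ yields $R^m_{\ m\cdot i} = (n-1)F^2 K_{\cdot i} + 2(n-1)KFF_{y^i}$. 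The $KFF_{y^i}$ contributions cancel when these are combined in $2R^m_{\ i\cdot m}+R^m_{\ m\cdot i}$; moreover, since $K$ is $0$-homogeneous in $y$, Euler's theorem forces $y^m K_{\cdot m}\equiv 0$, killing the remaining non-$K_{\cdot i}$ term. The outcome I expect is the clean identity
\[
\chi_i \;=\; -\tfrac{n+1}{6}\, F^2\, K_{\cdot i}.
\]

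Both directions of the equivalence now follow instantly. If the flag curvature is isotropic, i.e. $K=K(x)$, then $K_{\cdot i}=0$ and hence $\chi_i=0$. Conversely, $\chi_i=0$ together with $F>0$ and $n+1\neq 0$ forces $K_{\cdot i}=0$, so $K$ is independent of $y$; i.e., the flag curvature is isotropic.

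The main obstacle is the parenthetical strengthening ``constant if $n\geq 3$''. This is not a statement about $\chi$ at all but the classical Finslerian Schur lemma: an isotropic scalar flag curvature $K=K(x)$ on a manifold of dimension $n\geq 3$ is automatically (locally) constant. The standard proof applies the second Bianchi identity to $R^i_{\ k}=K(x)(F^2\delta^i_k-g_{kl}y^l y^i)$, differentiates horizontally, and uses the linear independence of the resulting tensorial terms (which fails only in dimension $2$) to conclude that $K_{x^i}=0$. I would invoke this well-known fact rather than reproduce it here.
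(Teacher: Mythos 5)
Your proposal is correct, but note that the paper itself offers no proof of Lemma \ref{Lemma2.3} at all: it is quoted verbatim from \cite{Sh}, so there is nothing internal to compare against. Your computation supplies the missing argument and it checks out. Writing $R^m_{\ i}=K\bigl(F^2\delta^m_i-g_{il}y^ly^m\bigr)$, one indeed gets $K_{\cdot m}h^m_{\ i}=F^2K_{\cdot i}$ (the term $FF_{y^i}\,y^mK_{\cdot m}$ dies by Euler's theorem applied to the $0$-homogeneous $K$), and $h^m_{\ i\cdot m}=2FF_{y^i}-(n+1)FF_{y^i}=-(n-1)FF_{y^i}$ since $FF_{y^i}$ is $1$-homogeneous; combined with $R^m_{\ m\cdot i}=(n-1)F^2K_{\cdot i}+2(n-1)KFF_{y^i}$, the $KFF_{y^i}$ terms cancel in $2R^m_{\ i\cdot m}+R^m_{\ m\cdot i}$ and the identity $\chi_i=-\tfrac{n+1}{6}F^2K_{\cdot i}$ follows, from which both directions are immediate; the parenthetical strengthening is exactly the Finslerian Schur lemma, which is legitimately invoked rather than reproved. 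The one caveat is that your argument is a proof only modulo formula (\ref{chi_iR}), which expresses the S-curvature-defined quantity $\chi_i$ in terms of the Riemann curvature; that identity is itself a nontrivial Bianchi-type result from \cite{Sh}, but since the paper states it as a standing fact, taking it as given is consistent with the paper's own framework.
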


Let  $F=|y|\phi(r, s)$ be a spherically symmetric metric on $B^n(\rho)\subset R^n$.
By differentiating (\ref{Rij}) and using (\ref{chi_iR}), we can easily obtain a formula for $\chi_i$:
\begin{equation}
\chi_i
= -\frac{1}{2}\Big  \{ (n+1) R_3 + (r^2-s^2) [R_2]_s \Big \} ( |y| x^i - s y^i ).
\label{chiR1R2R4}
\end{equation}
We have the following

\begin{lem}
\label{Lemma2.4}
For a spherically symmetric metric on $R^n$,  $\chi_i=0$ if and only if
\be
 (n+1) R_3 + (r^2-s^2) [R_2]_s =0.
\ee
\end{lem}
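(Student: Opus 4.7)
The plan is to read the conclusion directly off the formula (\ref{chiR1R2R4}), which is already stated immediately before the lemma. That formula factors $\chi_i$ as a product of a scalar coefficient depending only on $(r,s)$, namely
\[
c(r,s) := -\tfrac{1}{2}\bigl\{(n+1)R_3 + (r^2-s^2)[R_2]_s\bigr\},
\]
times the vector field $V^i := |y|x^i - sy^i$. The ``if'' direction is then trivial: if $(n+1)R_3 + (r^2-s^2)[R_2]_s \equiv 0$ on $[0,\rho)\times(-\rho,\rho)$, then $c(r,s)\equiv 0$ and hence $\chi_i \equiv 0$.

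For the converse, the main observation is that the vector $V^i = |y|x^i - sy^i$ does not vanish identically. Substituting $s = \langle x,y\rangle/|y|$ one sees that $V = 0$ at $(x,y)$ iff $|y|^2 x = \langle x,y\rangle y$, i.e., iff $x$ and $y$ are linearly dependent. Thus on the open, dense set where $x$ and $y$ are independent, $V^i \neq 0$, and since $c$ depends only on $(r,s)$ we may choose $x,y$ with prescribed $(r,s) \in [0,\rho)\times(-\rho,\rho)$ and $x,y$ linearly independent. Consequently $\chi_i = 0$ forces $c(r,s)=0$ at every such $(r,s)$, proving the equation $(n+1)R_3 + (r^2-s^2)[R_2]_s = 0$ pointwise.

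I expect no real obstacle here: the computational heart of the lemma is the derivation of (\ref{chiR1R2R4}) from (\ref{Rij}) and (\ref{chi_iR}), which the authors dispatch with the phrase ``we can easily obtain.'' Given that formula, the lemma is essentially a tautology modulo the elementary observation about when $V^i$ vanishes. If one wanted to be fully self-contained, the only work would be to verify (\ref{chiR1R2R4}) by differentiating the three pieces of (\ref{Rij}) in $y^i$ (using $s_{\cdot i} = x^i/|y| - sy^i/|y|^2$ and $|y|_{\cdot i} = y^i/|y|$), tracing the resulting expression, and symmetrizing according to $\chi_i = -\frac{1}{6}\{2 R^m_{\ i\cdot m} + R^m_{\ m\cdot i}\}$; the $R_1$-term contributes only to a multiple of $y^i$ that is absorbed, while the $R_2$- and $R_3$-terms produce the stated coefficient of $V^i$. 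This is routine bookkeeping, not conceptual difficulty.
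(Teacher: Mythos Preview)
Your proposal is correct and matches the paper's approach exactly: the paper gives no separate proof of Lemma~\ref{Lemma2.4}, treating it as an immediate consequence of formula~(\ref{chiR1R2R4}), and your argument simply makes explicit the one point left tacit there, namely that the vector $|y|x^i - sy^i$ vanishes only on the thin set where $x$ and $y$ are linearly dependent, so the scalar coefficient must vanish identically.
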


There is another important non-Riemannian quantity, the  H-curvature  $H= H_{ij} dx^i\otimes dx^j$,  defined by $ H_{ij}:=  E_{ij|m}y^m$,  where $E_{ij}:= \frac{1}{2} S_{\cdot i \cdot j}$ denotes  the mean Berwald curvature.  Here $S$ is the S-curvature.   $H$ can be also expressed in terms of $\chi_i$ by
\be
  H_{ij} =\frac{1}{2}\Big  \{ \chi_{i\cdot j}+\chi_{j\cdot i} \Big \}.\label{Hijchi}
\ee
(See \cite{Sh}).
The Ricci curvature tensor  $Ric_{ij}$ in (\ref{Ric_ij}) is related to the Ricci curvature  $Ric = R^m_{\ m}$  by the following identity:
\be
Ric_{ij} = \frac{1}{2} [Ric]_{y^iy^j} + H_{ij}.
\ee

For spherically symmetric metrics on $R^n$,
by  differentiating  $\chi_i$ and using (\ref{chiR1R2R4})  we obtain
\be
H_{ij} = M_s  |y|^{-2} ( |y| x^i -s y^i) (|y|x^j-s y^j) - s M |y|^{-2} (|y|^2 \delta_{ij} -y^i y^j ), \label{HijM}
\ee
where $M:=-\frac{1}{2}\{ (n+1) R_3 + (r^2-s^2) [R_2]_s\}$.
\\
\\
We have the following:

\begin{lem}\label{Lemma2.5} For spherically symmetric metrics on $R^n$,
$\chi_i=0$ if and only if $H_{ij}=0$.
\end{lem}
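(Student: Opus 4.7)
The plan is to read the lemma off from the two explicit formulas (\ref{chiR1R2R4}) and (\ref{HijM}) that are already in place. Introduce the abbreviations
\[
V^i := |y|x^i - sy^i, \qquad W_{ij} := |y|^2\delta_{ij} - y^iy^j,
\]
and let $M(r,s) := -\tfrac{1}{2}\{(n+1)R_3 + (r^2-s^2)[R_2]_s\}$, so that (\ref{chiR1R2R4}) becomes $\chi_i = M\,V^i$ and (\ref{HijM}) becomes $H_{ij} = |y|^{-2}\bigl(M_s\,V^iV^j - sM\,W_{ij}\bigr)$. The whole proof reduces to showing that each of these two tensor identities is equivalent to $M\equiv 0$ on the admissible range $\{(r,s):|s|\le r\}$.

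For the direction $\chi_i=0 \Rightarrow H_{ij}=0$: the vector $V^i$ vanishes exactly on the boundary $|s|=r$ (where $y\parallel x$), so on the open subcone $|s|<r$ one has $V^i\neq 0$, and $M\,V^i=0$ forces $M(r,s)=0$ there. By continuity $M\equiv 0$ on the whole admissible range and hence $M_s=0$ on its interior, so substituting in (\ref{HijM}) gives $H_{ij}=0$.

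For the converse, assume $H_{ij}=0$. The key observation is $V^iy^i = |y|\langle x,y\rangle - s|y|^2 = 0$, which makes two natural contractions independent. Taking the $\delta^{ij}$-trace, and using $\delta^{ij}V^iV^j = |V|^2 = |y|^2(r^2-s^2)$ and $\delta^{ij}W_{ij} = (n-1)|y|^2$, yields
\[
M_s(r^2-s^2) - (n-1)sM = 0.
\]
Contracting instead with $V^iV^j$, and using $W_{ij}V^iV^j = |y|^2|V|^2 - (V\cdot y)^2 = |y|^4(r^2-s^2)$, gives (after dividing by $|y|^2(r^2-s^2)$, valid on $|s|<r$)
\[
M_s(r^2-s^2) - sM = 0.
\]
Subtracting, $(n-2)sM = 0$. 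Under the standing hypothesis $n\geq 3$ inherited from the setting of Lemma~\ref{Lemma2.1}, this forces $sM\equiv 0$ on $|s|<r$, hence $M\equiv 0$ there by continuity (first off $\{s=0\}$, then on it by continuity of $M$). Therefore $\chi_i=MV^i=0$ by (\ref{chiR1R2R4}).

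The only step of real substance is the algebraic separation of the two tensor structures $V^iV^j$ and $W_{ij}$ in the formula (\ref{HijM}): in dimension $n\geq 3$ the two contractions above are linearly independent and give the two scalar equations needed, while in $n=2$ these structures are proportional (since $W_{ij} = V^iV^j/(r^2-s^2)$) and the separation fails, so the dimensional hypothesis is essential.
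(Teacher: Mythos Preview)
Your proof is correct, and both your argument and the paper's implicitly require $n\ge 3$; you are right to make this explicit.

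The two proofs diverge only in the converse direction $H_{ij}=0\Rightarrow\chi_i=0$. The paper contracts once, with $x^ix^j$ (equivalent to your contraction with $V^iV^j$, since $H_{ij}y^j=0$), to obtain $(r^2-s^2)M_s=sM$; it then substitutes this relation back into (\ref{HijM}), obtaining
\[
H_{ij}=M_s\,|y|^{-2}\bigl(V^iV^j-(r^2-s^2)W_{ij}\bigr)=0,
\]
and reads off $M_s=0$ from the nonvanishing of the bracketed tensor---which is precisely your observation that $V^iV^j$ and $W_{ij}$ are independent only when $n\ge 3$. From $M_s=0$ and $(r^2-s^2)M_s=sM$ one gets $M=0$. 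You instead take two independent contractions, with $\delta^{ij}$ and with $V^iV^j$, and subtract to reach $(n-2)sM=0$ directly. Your route makes the dimensional hypothesis transparent and avoids the substitution step; the paper's route uses one fewer contraction but buries the dimension in a ``clearly''. For the forward direction the paper is silent because (\ref{Hijchi}) already gives it; your direct argument via $M=0$ is equally valid.
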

\begin{proof}
Assume that $H_{ij}=0$. Contracting (\ref{HijM}) with $x^i$ and $x^j$ yields
\[   H_{ij}x^ix^j = \{ (r^2-s^2)M_s - s M\} (r^2-s^2)=0.\]
Thus $(r^2-s^2)M_s = s M$. Plugging it into (\ref{HijM}) gives
\[ H_{ij} = M_s |y|^{-2} \{  ( |y| x^i -s y^i) (|y|x^j-s y^j) -(r^2-s^2) (|y|^2 \delta_{ij} -y^i y^j ) \} =0.\]
Clearly we have $M_s=0$, hence $M=0$. Then $\chi_i= M (|y| x^i-s y^i) =0$. This proves the lemma.
\end{proof}

\bigskip

\section{Proof of Main Theorems}

With the above preparation, the proofs of the main results become quite simple.

\bigskip

%\noindent
\begin{proof}(Theorem \ref{thm1.1})   We take the trace of the formula (\ref{Rij}). The trace is the Ricci curvature  given by

\begin{align}\label{Ric1}
Ric & =(n-1) |y|^2 R_1  + (r^2-s^2) |y|^2  R_2 .
\end{align}
Thus $Ric = (n-1)K F^2$ if and only if (\ref{Eqn-MainThm}) holds.
\end{proof}

\bigskip

%\noindent

\begin{proof}(Theorem \ref{thm1.2})   Assume that (\ref{R1R2}) holds.
Since $R_2=0$, we see that $F$ is of scalar flag curvature by Lemma \ref{Lemma2.1}.
On the other hand,
\[  Ric = (n-1)R_1 |y|^2 + (r^2-s^2) R_2  |y|^2  = (n-1)  K \phi^2 |y|^2 = (n-1) K F^2.\]
Namely, $F$ is of constant Ricci curvature $K$. Thus $F$  must be of constant flag curvature $K$. In this case, $R_3=0$ by Theorem \ref{MO.10.13-thm1.1x}.
This completes the proof.
\end{proof}

\bigskip

\begin{proof}(Theorem \ref{thm1.3})  For any Finsler metric,  $Ric_{ij}=(n-1) K g_{ij}$ if and only if $Ric=(n-1) K F^2$ and $H_{ij}=0$. By Lemma \ref{Lemma2.5}, for any spherically symmetric metric,  $H_{ij}=0$ if and only if $\chi_i=0$.  Thus for a spherically symmetric metric $F= |y|\phi(r,s)$ on $R^n$,
$Ric_{ij}= (n-1) K g_{ij}$ if and only if $Ric=(n-1) K F^2$ and $\chi_i=0$. By (\ref{chiR1R2R4}) and (\ref{Ric1}), we prove the theorem.
\end{proof}

\bigskip
%\noindent

\begin{proof}(Theorem \ref{thm1.4})  Assume that $F$ is of constant flag curvature. Then it follows from Theorems \ref{thm1.2} and \ref{thm1.3} that $R_2=0$ and $R_3 =0$. Conversely, assume that $R_2=0$ and $R_3 =0$. First by (\ref{chiR1R2R4}), we see that $\chi_i=0$. By Lemma \ref{Lemma2.1}, we see that $F$ is of scalar flag curvature. Then the theorem follows from Lemma \ref{Lemma2.3}.   We can also  prove this  using (\ref{Req1}) and (\ref{Req2}).  Under the assumption that $R_2=0$ and $R_3 =0$, and from (\ref{Req1}) and (\ref{Req2}), we get that
\[ -\frac{1}{2} [R_1]_s = R_4 =  -\frac{\phi_s}{\phi} R_1.\]
Thus
\[  \Big [ \frac{R_1}{\phi^2} \Big ]_s =0.\]
This gives
\[ R_1 = K \phi^2,\]
where $K= K(r)$ is independent of $s$. Then $F$ is of isotropic flag curvature by Theorem \ref{thm1.2}. $K$ must be a constant by the Schur Lemma.
\end{proof}

\section{Special Solutions}

We now look at the special case when $Q=0$, i.e.,
\be
 \phi_r - s\phi_{rs}- r\phi_{ss} =0.  \label{PDE1}
\ee
In this case, $F=|y| \phi(r, s)$ must be projectively flat and
\begin{eqnarray}
R_1  & = &  \psi^2 - \frac{1}{r} ( s \psi_r + r \psi_s)\\
R_2 & = &0,\\
R_3 & = & \frac{1}{r}\Big  \{ \psi_r-   s\psi_{rs} - r \psi_{ss} \Big \},\\
R_4 & = & \frac{1}{r} ( 2 \psi_r -r \psi\psi_s - s\psi_{rs}-r \psi_{ss} ),
\end{eqnarray}
where
\[ \psi: =\frac{1}{2r\phi}(s\phi_{r} +r \phi_{s}).\]
By Theorems \ref{thm1.2} and \ref{thm1.4}, $F$ is of constant flag curvature $K$ if $\phi$ satisfies one of the following equations:
\be
K \phi^2 = \psi^2 - \frac{1}{r} (r\psi_{s} + s \psi_{r}),     \label{pde-M2}
\ee
\be
\psi_r - s\psi_{rs} - r \psi_{ss}  =0.  \label{PDE2}
\ee
Note that (\ref{PDE1}) and (\ref{PDE2}) are similar.

 (\ref{PDE1}) and (\ref{pde-M2}) are solvable
(see Shen-Yu \cite{ShenYu}), hence we obtain special solutions of (\ref{R1R2}).
 The key idea is to use the following special substitution
\[   u : = r^2 -s^2, \ \ \ \ \ v : =s.\]
Then
\[  \phi_r = 2r \phi_u, \ \ \ \ \ \ \phi_s = -2s \phi_u +\phi_v.\]
This gives
\[ \psi = \frac{ \phi_v}{2\phi} = \Big (\ln \sqrt{\phi} \Big )_v.\]
Similarly, we have
\[ \frac{ s\psi_r + r\psi_s} {r} = \psi_v.\]
Thus (\ref{pde-M2}) can be written as
\be
K\phi^2 = \psi^2 -\psi_v = \Big ( \frac{\phi_v}{2\phi}  \Big )^2 - \Big ( \frac{\phi_v}{2\phi}\Big )_v.
\ee
This is just an ODE in $v$. After solving  this ODE, then plugging it into (\ref{PDE1}), one
obtains  all solutions to (\ref{PDE1}) and (\ref{pde-M2}).
The corresponding spherical symmetric metrics  must be of  constant curvature.

\begin{prop}  {\rm (\cite{ShenYu})}  \label{lem3} The non-constant solutions of equations  (\ref{PDE1}) and  (\ref{pde-M2}) are given by

\begin{align}
\phi(r,s)& = \frac{1}{2\sqrt{-K}}  \frac{1}{ \sqrt{C-r^2+s^2 }+ s}   \label{soln1}  ~~~~~~~~ \texttt{or} \\
% or   \nonumber\\
\phi(r,s)& = \frac{q}{q^2( D q+v)^2 +K}  \label{soln2}     \\
\nonumber\\
 ~~ \texttt{where} ~~q& \neq   0 ~~  \texttt{is determined by the following equation}~~\nonumber\\
\nonumber\\
0& = D^2q^4 +(u-C) q^2-K.    \label{soln2-c1}  
\end{align}

where $u=r^2-s^2$, $v=s$, and both $C$ and $D$  are  constant numbers.
\end{prop}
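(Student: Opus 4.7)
My plan is to exploit the substitution $u=r^2-s^2$, $v=s$ of the preceding paragraph and reduce $(\ref{PDE1})$ and $(\ref{pde-M2})$ to a small system of ordinary differential equations in $u$, with $v$ acting as a polynomial variable.

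Set $h := 1/\phi$. Using $\psi = \phi_v/(2\phi)$ and $(s\psi_r + r\psi_s)/r = \psi_v$ (both already established in the excerpt), equation $(\ref{pde-M2})$ is equivalent to
\[
2h\,h_{vv} - h_v^2 = 4K.
\]
Differentiating once in $v$ gives $h\,h_{vvv} = 0$, and since $h > 0$ this forces $h_{vvv} \equiv 0$. Hence
\[
h(u,v) = \alpha(u) + \beta(u)\,v + \gamma(u)\,v^2,
\]
and $(\ref{pde-M2})$ now reduces to the algebraic side-condition $4\alpha\gamma - \beta^2 = 4K$.

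Next I would rewrite $(\ref{PDE1})$ in the $(u,v)$ chart. A direct chain-rule calculation converts $\phi_r - s\phi_{rs} - r\phi_{ss}=0$ to
\[
4\phi_u + 2v\,\phi_{uv} - \phi_{vv} = 0,
\]
equivalently, for $h = 1/\phi$,
\[
-4h\,h_u - 2v\,h\,h_{uv} + 4v\,h_u h_v + h\,h_{vv} - 2h_v^2 = 0.
\]
Substituting the quadratic form of $h$ and collecting the coefficients of $v^0, v^1, v^2, v^3$ (the $v^4$-term cancels identically) produces four ODEs for $(\alpha,\beta,\gamma)$. The coefficient of $v^3$ simplifies to $\beta'\gamma - 2\beta\gamma' = 0$, which forces the dichotomy $\gamma \equiv 0$ or $\beta/\gamma^2 \equiv 2D$ for some constant $D$.

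In the first case, the algebraic constraint gives $\beta^2 = -4K$, so $K<0$ and $\beta$ is a non-zero constant; the surviving $v^0$-equation integrates to $\alpha(u)^2 = -4K(C-u)$ for a constant $C$, and restoring $\phi = 1/h$ produces $(\ref{soln1})$. In the second case, setting $q := \gamma$ and $\beta = 2Dq^2$, the algebraic constraint determines $\alpha = K/q + D^2 q^3$, and the $v^1$-equation becomes
\[
2q'(K + D^2 q^4) + q^3 = 0,
\]
whose first integral is precisely $(\ref{soln2-c1})$. Back-substitution into $h = \alpha + \beta v + \gamma v^2$ and inversion yields $(\ref{soln2})$.

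The main obstacle is verifying that the remaining coefficient equations (the $v^2$-equation, and the $v^0$-equation in the second case) are genuine consequences of the $v^3$- and $v^1$-equations together with $4\alpha\gamma - \beta^2 = 4K$; only then does one know that $(\ref{soln2-c1})$ exhausts all non-constant solutions in the second family. This consistency check is routine but bookkeeping-heavy, and is the content of the Shen--Yu calculation \cite{ShenYu} invoked by the Proposition.
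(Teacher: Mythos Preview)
The paper does not actually prove this proposition: it is quoted from \cite{ShenYu}, and the paper only sets up the substitution $u=r^2-s^2$, $v=s$ and observes that $(\ref{pde-M2})$ becomes an ODE in $v$, referring to Shen--Yu for the rest. Your argument follows exactly this outline and supplies the missing details; it is correct.

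A few remarks. Your substitution $h=1/\phi$ is well chosen: it turns the Riccati-type equation $(\ref{pde-M2})$ into $2hh_{vv}-h_v^2=4K$, whose derivative immediately gives $h_{vvv}=0$, so $h$ is quadratic in $v$. Your transformation of $(\ref{PDE1})$ to $4\phi_u+2v\phi_{uv}-\phi_{vv}=0$ and then to the quadratic relation in $h$ is accurate, and the coefficient extraction is right: the $v^3$-coefficient is $2(\beta'\gamma-2\beta\gamma')$, giving the dichotomy $\gamma\equiv 0$ or $\beta=2D\gamma^2$.

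The ``obstacle'' you flag at the end is not one. In the second case, once you set $q=\gamma$, $\beta=2Dq^2$, $\alpha=K/q+D^2q^3$, a direct check shows that the $v^2$-equation $2\alpha\gamma'+\gamma^2=0$ and the $v^1$-equation $\alpha\beta'+\beta\gamma=0$ both reduce to the \emph{same} ODE $2q'(K+D^2q^4)+q^3=0$, and the $v^0$-equation $2\alpha\alpha'+3\alpha\gamma-4K=0$ follows from it via the factorization $K^2-2KD^2q^4-3D^4q^8=(K-3D^2q^4)(K+D^2q^4)$. So the system is fully consistent and $(\ref{soln2-c1})$ is indeed the complete first integral; no extra input from \cite{ShenYu} is needed beyond what you have written.
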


Three interesting solutions are given as follows

\begin{description}
\item[Solution 1]   $D\neq0 $  and  $K=0$, $\phi$ is given by
\begin{align}
\phi(r,s)& = \frac{D}{\sqrt{C-r^2 +s^2}(\sqrt{C-r^2 +s^2} - s)^2} \label{soln4}
\end{align}
\noindent In this case, the corresponding \ssfm s are Berwald metrics.\\

\item[Solution 2]  $D\neq0 $  and  $K=-1$, $\phi$ is given by
\begin{align}
\phi(r,s)& = \frac{1}{2}\Big\{\frac{1}{\sqrt{C+2D-r^2 +s^2}-s}  -\frac{1}{\sqrt{C-2D-r^2 +s^2}-s} \Big\}   \label{soln5}
\end{align}
\noindent In this case, the corresponding \ssfm s are first given by Z. Shen in \cite{Sh1}.\\

\item[Solution 3]   $D\neq0 $, $K=1$,   and  $q$ is real, $\phi$ is given by
\begin{align}
\phi(r,s)& =  Re\Big (\frac{1}{\sqrt{C+2iD-r^2 +s^2}-is}  \Big) \label{soln6}
\end{align}
\noindent In this case, the corresponding \ssfm s are Bryant's metrics.
\end{description}

\subsection*{Acknowledgements}
Dr. \d{S}engelen Sev\.{i}m  and Dr. \"Ulgen  were partly supported by The Scientific and Technological Research Council of Turkey (TUBITAK) (grant no. 113F311).

\end{document}